\theoremstyle{definition}
\newtheorem{defn}{{Definition}}
\newtheorem*{defn*}{{Definition}}
\newtheorem*{Q*}{{Question}}
\theoremstyle{plain}
\newtheorem{thm}[defn]{Theorem}
\newtheorem*{thm*}{Theorem}
\newtheorem*{cor*}{Corollaire}
\newtheorem{prop}[defn]{{Proposition}}
\newtheorem*{prop*}{{Proposition}}
\newtheorem{lem}[defn]{{Lemma}}
\theoremstyle{remark}
\newtheorem*{rmq}{{Remark}}
\title{A remark on Cantor derivative}
\author{C\'e{d}ric Milliet}
\curraddr[C. Milliet]{Universit\'e de Lyon, Universit\'e Lyon 1, Institut Camille Jordan UMR 5208 CNRS,
43 boulevard du 11 novembre 1918,\newline%
\indent 69622 Villeurbanne Cedex, France}%
\email[C. Milliet]{milliet@math.univ-lyon1.fr}%
\keywords{Cantor-Bendixson rank, Cantor derivative, finite correspondence}
\subjclass[msc2000]{54A05, 54B99, 54D30, 54F65}
\begin{document}

\begin{abstract}It is shown that, modulo an equivalence relation induced by finite correspondences preserving Cantor rank, the class of topological spaces is an integral semi-ring on which the Cantor derivative is precisely a derivation.\end{abstract}

\maketitle

The notions of limit point and derivated space have both been introduced by Georg Cantor in~1872 to derivate sets of convergence of trigonometric series. In \cite{Can} Cantor shows that the representation of a function as a trigonometric series is unique on a set minus some finite Cantor ranked set. It was as he considered points systems having infinite Cantor rank that he introduced transfinite induction. That was in~1880, three years before his set theory. Cantor's words are \emph{Grenzpunkt} and \emph{abgeleitete Punktmenge} for "limit point" and "derived space" respectively. He was already writing $P'$ or $P^{(1)}$ for the first derivative of a set of points~$P$. However, it does not seem at all that Cantor had in mind a Leibniz formula, but it is intriguing that the class of topological spaces can naturally be turned into a semi-ring where Cantor's derivative is actually a derivation.

All spaces considered in the sequel are topological spaces. A \emph{correspondence} between two spaces $X$ and $Y$ is any relation $R\subset X\times Y$ such that the projections to $X$ and $Y$ are onto. We write $R^{-1}$ for the inverse correspondence of $R$ from $Y$ to $X$. If $O$ is an open set in $X$, we define $R(O)$ as $\{y\in Y: (x,y)\in R \text{ for some }x \in O\}$. The relation $R$ is \emph{continuous} if for every open set $O$ in $Y$, the set $R^{-1}(O)$ is open in $X$. It is \emph{open} if $R^{-1}$ is continuous. It is a \emph{$n$-to-$m$} correspondence if for all $x,y$ in $X\times Y$, the set $R(\{x\})$ has cardinal at most $m$ and $R^{-1}(\{y\})$ has cardinal at most $n$. A correspondence is \emph{finite} if it is $n$-to-$m$ for some non-zero integers $n$ and $m$.

Let $X$ be any topological space. We slightly modify the usual definitions to avoid the use of any separation axiom, and call a point \emph{isolated} if it belongs to a finite open set. Otherwise, we say that it is a \emph{limit point}. We shall write $X'$ for the \emph{derivative} of $X$, that is, the set of limit points with the induced topology, and define a descending chain of closed subsets $X^\alpha$ by setting, inductively
\begin{itemize}
\item[]$X^0=X$
\item[]$X^{\alpha+1}=\big(X^\alpha\big)'\ \text{for a successor ordinal}$
\item[]$X^\lambda=\displaystyle\bigcap_{\alpha<\lambda}X^\alpha\ \text{for a limit ordinal}\ \lambda$\end{itemize}

We call \emph{Cantor-Bendixson rank} of $X$, written $CB(X)$, the least ordinal $\alpha$ such that $X^\alpha$ is empty if such an ordinal exists, or $\infty$ otherwise. The rank of a point $x$ is the supremum of the $\alpha$ such that $x\in X^\alpha$. A subset, or a point of $X$ has \emph{maximal} rank if it has the same Cantor-Bendixson rank as $X$. Otherwise, we say that it has \emph{small} rank.

We call a \emph{rough partition} of $X$, any covering of $X$ by open sets having maximal rank and small ranked intersections. We define the \emph{Cantor-Bendixson degree} $d(X)$ of $X$ to be the supremum cardinal of the rough partitions of $X$. Open continuous finite correspondences do preserve the rank~:

\begin{lem}\label{rangfibresfinies}Let $R$ be a $n$-to-$m$ correspondence between two spaces $X$ and $Y$.\begin{itemize} \item[$(i)$] If $R$ is open, $CB(X)\geq CB(Y)$.
\item[$(ii)$] If $R$ is continuous, $CB(X)\leq CB(Y)$.
\item[$(iii)$] If $R$ is continuous and open, then $X$ and $Y$ have the same rank and $$\frac{1}{m}\cdot d(Y)\leq d(X)\leq n\cdot d(Y)$$\end{itemize}\end{lem}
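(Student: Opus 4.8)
\emph{Sketch of a proof.} Everything follows from one transfinite induction. I would first prove the inclusion $(\star)$: \emph{if $R$ is open then $R^{-1}(Y^\alpha)\subseteq X^\alpha$ for every ordinal $\alpha$}. The cases $\alpha=0$ and $\alpha$ limit are purely formal — for limit $\lambda$, $R^{-1}(Y^\lambda)=R^{-1}(\bigcap_{\alpha<\lambda}Y^\alpha)\subseteq\bigcap_{\alpha<\lambda}R^{-1}(Y^\alpha)\subseteq\bigcap_{\alpha<\lambda}X^\alpha=X^\lambda$ — so the content lies in the successor step. Fix $y\in Y^{\alpha+1}=(Y^\alpha)'$ and $(x,y)\in R$; the induction hypothesis applied to $y\in Y^\alpha$ already gives $x\in X^\alpha$, and I claim $x$ is a limit point of $X^\alpha$. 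If not, take $O$ open in $X$ with $x\in O$ and $O\cap X^\alpha$ finite. Then $R(O)$ is open in $Y$ — this is exactly where openness of $R$ enters — and $y\in R(O)$; moreover for any $y'\in R(O)\cap Y^\alpha$, any $x'\in O$ with $(x',y')\in R$ satisfies $x'\in X^\alpha$ by the induction hypothesis, hence $x'\in O\cap X^\alpha$, so $R(O)\cap Y^\alpha\subseteq R(O\cap X^\alpha)$, a set of at most $m\cdot|O\cap X^\alpha|$ points because $R$ is $n$-to-$m$. Thus $y$ would have a finite neighbourhood in $Y^\alpha$, contradicting $y\in(Y^\alpha)'$. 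This proves $(\star)$; since the projection of $R$ onto $Y$ is onto, $(\star)$ gives $R^{-1}(\{y\})\subseteq X^\alpha$ for every $y\in Y^\alpha$, whence $Y^\alpha\neq\emptyset\Rightarrow X^\alpha\neq\emptyset$ and $CB(X)\ge CB(Y)$, which is $(i)$. Item $(ii)$ is $(i)$ applied to the open $m$-to-$n$ correspondence $R^{-1}$, and the rank equality in $(iii)$ is $(i)$ and $(ii)$ together. I expect this successor step to be the one real idea of the proof.

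For the degree statement I would next isolate a \emph{restriction lemma}: for $U$ open in $X$ the set $R(U)$ is open in $Y$ and $R\cap(U\times R(U))$ is an $n$-to-$m$ open continuous correspondence between $U$ and $R(U)$; symmetrically, for $V$ open in $Y$, $R\cap(R^{-1}(V)\times V)$ is one between $R^{-1}(V)$ and $V$. Both verifications are routine once one notices that $R(U)$ is open in $Y$ and $R^{-1}(V)$ is open in $X$, so the subspace topologies in play are just the traces of the ambient ones. Feeding this into the rank part of $(iii)$ already obtained gives $CB(R(U))=CB(U)$ and $CB(R^{-1}(V))=CB(V)$; together with the elementary identity $U^\alpha=U\cap X^\alpha$ for $U$ open, this says that $R$ and $R^{-1}$ send maximal-rank open sets to maximal-rank open sets and small-rank open sets to small-rank open sets. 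In particular $R$ carries a rough partition of $X$ to an open covering of $Y$ by maximal-rank sets; what remains is to repair the intersections.

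When $CB(X)=\sigma+1$ is a successor no repair is needed: for open $U$ one has $CB(U)=\sigma+1$ iff $U$ meets $X^\sigma$, and $CB(U\cap U')<\sigma+1$ iff $U\cap U'$ misses $X^\sigma$, so a rough partition of $X$ is exactly a covering whose traces on $X^\sigma$ partition $X^\sigma$ into non‑empty (equivalently clopen) pieces; hence $d(X)$ is the supremum cardinal of such partitions of $X^\sigma$, and likewise for $Y$. By $(\star)$ and its dual $R$ restricts to a correspondence $R_\sigma$ between $X^\sigma$ and $Y^\sigma$ with both projections onto, and the restriction lemma applied along $X^\sigma$ and $Y^\sigma$ shows $R_\sigma$ is again $n$-to-$m$, continuous and open; a clopen partition of $Y^\sigma$ then pulls back along $R_\sigma$ to an open covering of $X^\sigma$ of multiplicity at most $m$ (bounded fibres), from which one recovers a clopen partition of $X^\sigma$ up to a factor at most $m$, and running the same with $R_\sigma^{-1}$ gives $\tfrac1m d(Y)\le d(X)\le n\,d(Y)$ — in the typical case where $X^\sigma$ and $Y^\sigma$ are discrete in the usual sense this is simply the two‑way count of the pairs of $R_\sigma$. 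The delicate situation is $CB(X)=CB(Y)=\rho$ a limit ordinal: there is no level $X^{\rho}$ to count, so one must transfer the rough partitions themselves, grouping the indices of a rough partition $\{V_j\}$ of $Y$ according to which preimages $R^{-1}(V_j)$ have maximal-rank pairwise intersection (and dually downstairs) and checking that the regrouped covering is a rough partition which has merged at most $n$ (resp. $m$) of the original indices. The main obstacle I anticipate is exactly this combinatorial–topological step: without a separation axiom one cannot freely thicken a small clopen piece into an open set of the right rank, and since an intersection can acquire large rank only through infinitely many small contributions, controlling the overlaps of the regrouped sets requires a careful hands‑on argument in place of the clean counting available in the successor case.
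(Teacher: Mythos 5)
Your handling of $(i)$, $(ii)$ and the rank equality in $(iii)$ is correct and is essentially the paper's argument: the paper also proves $R^{-1}(Y^\alpha)\subseteq X^\alpha$ by transfinite induction for an open $R$ (your successor step, with the bound $|R(O)\cap Y^\alpha|\le m\cdot|O\cap X^\alpha|$, is exactly the detail the paper leaves as ``inductively, one can prove''), and then uses surjectivity of the projection to $Y$ and the symmetric statement for $R^{-1}$. Your restriction lemma, the identity $U^\alpha=U\cap X^\alpha$ for open $U$, the description of $d(X)$ (for $CB(X)=\sigma+1$) as the supremum of cardinalities of partitions of $X^\sigma$ into nonempty relatively open pieces, and the fact that $R$ induces a continuous open $n$-to-$m$ correspondence $R_\sigma$ between $X^\sigma$ and $Y^\sigma$ (this uses $(\star)$ and its dual, since $X^\sigma$ is closed, not open) are all sound.

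The genuine gap is the counting step for the degree. From a clopen partition $\{V_j\}_{j\le q}$ of $Y^\sigma$ you do get a covering of $X^\sigma$ by the nonempty relatively open sets $R_\sigma^{-1}(V_j)$ in which each point lies in at most $m$ sets; but ``from which one recovers a clopen partition of $X^\sigma$ up to a factor at most $m$'' is asserted without any mechanism, and as a general statement about bounded-multiplicity coverings it is false: the preimages can overlap in a chain (some $x_1$ related to points of $V_1$ and $V_2$, some $x_2$ to points of $V_2$ and $V_3$, and so on), so the obvious move of merging overlapping sets can collapse arbitrarily many indices into a single open set already when $m=2$, and no grouping argument alone yields $q/m$ disjoint pieces. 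This is exactly where the paper's proof has its one nontrivial device, which your sketch is missing: starting from $d\cdot n+1$ maximal-rank open sets $O_i$ in $X$ with small-rank intersections, it passes to the images $R(O_i^\alpha)$ of the top-level traces, observes that any $n+1$ of them have empty intersection, chooses $d+1$ pairwise disjoint index sets $I_j$, each \emph{maximal} with $\bigcap_{i\in I_j}R(O_i^\alpha)\neq\emptyset$, and sets $V_j=\bigcap_{i\in I_j}R(O_i)$: it is maximality of $I_j$ (not merging) that forces $V_j\cap V_k$ to have small rank, and the bound $|I_j|\le n$ that gives the count. Some intersection-over-maximal-index-families argument of this kind has to replace your ``recover a partition'' step (it would also repair your successor case, since the same trick works on $X^\sigma$ directly). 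Finally, for the limit-rank situation you flag as delicate: your proposed grouping ``according to which preimages have maximal-rank pairwise intersection'' is not even an equivalence relation, so the regrouped covering is not defined; the paper sidesteps the issue by reducing to the case where $d(Y)$ is a finite integer, in which case the rank is a successor ordinal and the top level $Y^\alpha$ is available for counting.
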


\begin{proof}$(i)$ Let $y$ be a limit point in $Y$ and $x$ in $R^{-1}(\{y\})$. For every neighbourhood $O$ of $x$, the image $R(O)$ is an infinite neighbourhood of $y$, so $O$ is infinite. Hence $R^{-1}(Y')\subset X'$. Inductively, one can prove that $R^{-1}(Y^\alpha)\subset X^\alpha$. This shows that $Y^\alpha\subset R(X^\alpha)$.

$(ii)$ $R$ is continuous if and only if $R^{-1}$ is open, and the result follows from $(i)$.

$(iii)$ If $R$ is a $n$-to-$m$ correspondence from $X$ to $Y$, then $R^{-1}$ is a $m$-to-$n$ correspondence from $Y$ to $X$ so it is sufficient to prove the second equality. We may assume that the degree of $Y$ is an integer $d$. In that case, the rank of $Y$ is a successor ordinal, say $\alpha+1$. By the previous points, $X$ also has rank $\alpha+1$. Let us suppose for a contradiction that there be $O_0,\dots,O_{d\cdot n}$ a sequence of $d\cdot n+1$ open sets in $X$ with maximal rank, and small intersections. The sets $O_i^\alpha$ are disjoint. As $R$ is $n$-to-something, for every subset $I$ of $[0,d\cdot n]$ having at least $n+1$ points, the intersection $\bigcap_{i\in I} R(O_i^\alpha)$ is empty, so there exist $d+1$ disjoint subsets $I_0,\dots,I_d$ of $[0,d\cdot n]$ such that for all $j$, the set $\bigcap_{i\in I_j} R(O_i^\alpha)$ is nonempty, and $I_j$ is maximal with this property. Let us write $V_j$ for $\bigcap_{i\in I_j} R(O_i)$. Every $V_j$ is an open set in $Y$, with the same rank as $Y$ by point $(ii)$, and $V_j\cap V_k$ has small rank for $k\neq j$ in $[0,d]$, a contradiction with $Y$ having degree~$d$.\end{proof}

\begin{rmq}In Model Theory, Cantor-Bendixson rank gave birth to Morley rank in omega-stable theories \cite{Mor}. Points $(i)$ and $(ii)$ are well known by logicians and indicate that finite-to-one definable maps are "valuable" arrows for preserving a good notion of dimension~\cite{PoiPi}.\end{rmq}

Let $X$ be a set and $f$ a map on $2^X$ also defined on $2^X\times 2^X$. We say that $f$ is \emph{multiplicative} if $f(A\times B)$ equals $f(A)\times f(B)$. We call $f$ a \emph{pre-derivation} if $f(A\times B)$ equals $f(A)\times B\cup A\times f(B)$. Note a duality between some multiplicative maps and pre-derivations~:

\begin{lem}\label{der}Let $X$ be a set, and $f$ a map from $2^X$ to $2^X$ such that $f(A)\subset A$ for every subset $A$ of $X$. Write $\bar f(A)$ for $A\setminus f(A)$. Suppose in addition that $f$ is defined on finite cartesian products of $2^X$, and that $f$ is multiplicative. Then $\bar f$ is a pre-derivation.\end{lem}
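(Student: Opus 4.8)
The plan is to unwind both sides of the asserted identity $\bar f(A\times B)=\bar f(A)\times B\cup A\times\bar f(B)$ and reduce it to an elementary fact about complements of cartesian products. First I would use the definition of $\bar f$ to write $\bar f(A\times B)=(A\times B)\setminus f(A\times B)$, and then invoke multiplicativity of $f$ to replace $f(A\times B)$ by $f(A)\times f(B)$, so that the left-hand side becomes $(A\times B)\setminus\big(f(A)\times f(B)\big)$.

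The key step is then the purely set-theoretic identity: for any sets one has $(A\times B)\setminus(P\times Q)=\big((A\setminus P)\times B\big)\cup\big(A\times(B\setminus Q)\big)$. I would verify this by a direct membership argument: a pair $(a,b)$ lies in $A\times B$ but not in $P\times Q$ exactly when $a\in A$, $b\in B$, and $a\notin P$ or $b\notin Q$; distributing the disjunction over the conjunction gives precisely the condition ``$(a\in A\setminus P$ and $b\in B)$ or $(a\in A$ and $b\in B\setminus Q)$'', which is membership in the right-hand side. Applying this with $P=f(A)$ and $Q=f(B)$ turns $(A\times B)\setminus\big(f(A)\times f(B)\big)$ into $\big(\bar f(A)\times B\big)\cup\big(A\times\bar f(B)\big)$, which is exactly the right-hand side of the pre-derivation identity.

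There is essentially no obstacle here beyond bookkeeping. The one point worth noting is the role of the hypothesis $f(A)\subseteq A$: the complement identity above in fact holds with no inclusion assumption on $P$ and $Q$, so the hypothesis is not strictly needed for the computation itself; its purpose is to make $\bar f$ a genuine ``complement'' operation, with $\bar f(A)\subseteq A$ and $f(A)\cap\bar f(A)=\emptyset$, which is the context in which the duality between multiplicative maps and pre-derivations is meaningful. I would make this remark briefly and not belabour it.
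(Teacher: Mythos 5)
Your proof is correct: the reduction via multiplicativity to the elementary identity $(A\times B)\setminus(P\times Q)=\big((A\setminus P)\times B\big)\cup\big(A\times(B\setminus Q)\big)$, verified by a direct membership argument with $P=f(A)$, $Q=f(B)$, is exactly the intended computation (the paper states the lemma without proof, treating it as immediate). Your observation that the hypothesis $f(A)\subseteq A$ is not needed for the identity itself, but only to make $\bar f$ a genuine complement operation, is also accurate.
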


For two spaces $X$ and $Y$, we shall write $X \simeq Y$ if there is a finite correspondence from $X$ to $Y$ preserving the Cantor rank of each point. This is an equivalence relation.

We denote by $X\coprod Y$ the \emph{topological disjoint union} of $X$ and $Y$, that is, their disjoint union together with the finest topology for which the canonical injections $X\rightarrow X\coprod Y$ and $Y\rightarrow X\coprod Y$ are continuous.

Recall that any ordinal can be uniquely written as $\omega^{\alpha_1}. n_1+\hdots+\omega^{\alpha_k}. n_k$ where $\alpha_1,\hdots,\alpha_k$ is a strictly decreasing chain of ordinals and $n_1,\hdots ,n_k$ are non-zero integers. This is known as its \emph{Cantor normal form}. If $\alpha$ and $\beta$ are two ordinals with normal forms $\omega^{\alpha_1}. m_1+\hdots+\omega^{\alpha_k}. m_k$ and $\omega^{\alpha_1}. n_1+\hdots+\omega^{\alpha_k}. n_k$ respectively (with zero integers possibly to make their length match), their \emph{Cantor sum} $\alpha\oplus\beta$ is the ordinal $\omega^{\alpha_1}. (m_1+n_1)+\hdots+\omega^{\alpha_k}. (m_k+n_k)$.

\begin{prop}The class of topological spaces modulo $\simeq$, together with $\coprod$ and $\times$ is a commutative integral semi-ring, on which Cantor's derivative is a derivation. The Cantor-Bendixson rank is a homomorphism from the class of compact spaces modulo $\simeq$ to the ordinals, preserving the structure of ordered semi-ring. Here, the ordinals are considered with the operations $max$, $\oplus$, and their natural ordering.\end{prop}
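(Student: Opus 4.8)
The plan is to check the semi-ring axioms, then that $'$ is a derivation, then that $CB$ is the stated homomorphism; the common tool is a formula for the Cantor rank of a point of a product. The neutral elements are the empty space for $\coprod$ and the one-point space for $\times$, and commutativity, associativity of both operations and the distributivity $X\times(Y\coprod Z)\cong(X\times Y)\coprod(X\times Z)$ are homeomorphisms, hence equalities modulo $\simeq$ since a homeomorphism is a rank-preserving bijection. That $\simeq$ is a congruence is the one delicate point: if $R\colon X\to X'$ and $S\colon Y\to Y'$ are finite rank-preserving correspondences then $R\coprod S$ is one from $X\coprod Y$ to $X'\coprod Y'$ because ranks in a disjoint union are read summand by summand, while $\{((x,y),(x',y'))\colon(x,x')\in R,\ (y,y')\in S\}$ is a finite correspondence from $X\times Y$ to $X'\times Y'$ ($np$-to-$mq$ if $R$ is $n$-to-$m$ and $S$ is $p$-to-$q$) which preserves rank by the formula below. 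Integrality holds because the empty space is $\simeq$-equivalent only to itself, so $[X]\cdot[Y]=0$ forces $X\times Y=\emptyset$ and hence $X=\emptyset$ or $Y=\emptyset$, while the one-point space is nonempty.

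The key lemma is $\mathrm{rk}_{X\times Y}(x,y)=\mathrm{rk}_X(x)\oplus\mathrm{rk}_Y(y)$ for all $(x,y)$ (an unattained rank read as $\infty$, which is $\oplus$-absorbing). A point of a product lies in a finite open set iff both coordinates do, so the isolated-point operator is multiplicative and Lemma~\ref{der} gives $(A\times B)'=A'\times B\cup A\times B'$ for all $A,B$. From this, transfinite induction on $\gamma\oplus\delta$ yields $X^\gamma\times Y^\delta\subseteq(X\times Y)^{\gamma\oplus\delta}$ — the limit step using the recursion $\gamma\oplus\delta=\sup\{(\gamma'\oplus\delta)+1,\,(\gamma\oplus\delta')+1:\gamma'<\gamma,\ \delta'<\delta\}$ — which is the inequality "$\ge$". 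For "$\le$" I would show by induction on $\beta$ that $(x,y)\in(X\times Y)^\beta$ implies $\mathrm{rk}_X(x)\oplus\mathrm{rk}_Y(y)\ge\beta$: at $\beta=\beta_0+1$, if the sum were exactly $\beta_0$, choose open neighbourhoods $U\ni x$ and $V\ni y$ meeting $X^{\mathrm{rk}_X(x)}$ and $Y^{\mathrm{rk}_Y(y)}$ only in $x$ and $y$ (possible, those being isolated in these derived sets); then the inductive hypothesis and strict monotonicity of $\oplus$ force $(U\times V)\cap(X\times Y)^{\beta_0}=\{(x,y)\}$, contradicting that $(x,y)$ is a limit point of $(X\times Y)^{\beta_0}$. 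This is the technically central step.

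For the derivation: $'$ descends to $\simeq$-classes because restricting a finite rank-preserving correspondence to the derived spaces leaves a finite correspondence with projections still onto, and it preserves rank there since passing to the derivative replaces every nonzero rank $\alpha$ by the unique $\alpha_0$ with $1+\alpha_0=\alpha$; additivity $(X\coprod Y)'=X'\coprod Y'$ is a genuine equality of spaces, isolatedness being local. The Leibniz identity $(X\times Y)'\simeq(X'\times Y)\coprod(X\times Y')$ I expect to be the main obstacle. By the above $(X\times Y)'$ is the subspace $X'\times Y\cup X\times Y'$ of $X\times Y$; the rank formula expresses the rank of $(x,y)$ there as the "predecessor" of $\mathrm{rk}_X(x)\oplus\mathrm{rk}_Y(y)$, whereas its ranks in the two summands $X'\times Y$ and $X\times Y'$ are obtained by taking the predecessor first and then $\oplus$. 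Comparing these two rank distributions cardinal by cardinal, one checks that for every ordinal $\rho$ the rank-$\rho$ parts of the two sides have cardinalities in the same class (zero, finite nonzero, or a fixed infinite cardinal) and differ by a uniformly bounded multiplicity, so assembling correspondences level by level gives a single finite rank-preserving correspondence. The accounting closes because a point of rank $\alpha+1$ is a limit of infinitely many points of rank $\alpha$ (and a point of limit rank has points of unboundedly large smaller rank in every neighbourhood): the only discrepancy — that the predecessor operation does not commute with $\oplus$ — occurs at infinite ranks and is absorbed by an infinite level.

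Finally, on a nonempty compact space $(X^\alpha)$ reaches $\emptyset$ at a successor ordinal and $CB(X)$ is attained. Then $CB(X\coprod Y)=\max(CB(X),CB(Y))$ because $(X\coprod Y)^\alpha=X^\alpha\coprod Y^\alpha$; and since by the rank formula the maximal rank of a point of $X\times Y$ is the $\oplus$ of the maximal ranks of points of the factors, $CB$ — normalised so that the one-point space has rank $0$, i.e. defined as the supremum of the ranks of points — is $\oplus$-multiplicative. Monotonicity for the order induced by continuous finite correspondences is Lemma~\ref{rangfibresfinies}, and compatibility with the ordering of the ordinals is immediate; hence $CB$ is a morphism of ordered semi-rings onto $(\mathrm{Ord},\max,\oplus,\le)$.
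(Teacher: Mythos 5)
Most of your outline is sound and close in spirit to the paper: your pointwise formula $\mathrm{rk}_{X\times Y}(x,y)=\mathrm{rk}_X(x)\oplus\mathrm{rk}_Y(y)$ is exactly the pointwise form of the identity $(X\times Y)^\alpha=\bigcup_{\beta\oplus\gamma=\alpha}X^\beta\times Y^\gamma$ that the paper proves by induction, and the semi-ring, congruence, integrality and $CB^*$ parts are routine as you say (one small repair: with the paper's definition of \emph{isolated} --- membership in a \emph{finite} open set, no separation axiom --- you cannot choose $U,V$ meeting $X^{\mathrm{rk}(x)}$, $Y^{\mathrm{rk}(y)}$ only in $x,y$, but you can make these traces finite, and since every point of a finite open subset of $X^{\gamma}$ has rank exactly $\gamma$ your monotonicity argument still closes). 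The genuine gap is precisely where you predicted the main obstacle: the Leibniz identity. Your plan reduces it to the claim that for every $\rho$ the rank-$\rho$ level sets of $(X\times Y)'$ and of $X'\times Y\coprod X\times Y'$ lie in the same cardinality class with uniformly bounded multiplicity, and this is asserted ("one checks\dots"), not proved. The justification you offer (a point of rank $\alpha+1$ is a limit of infinitely many points of rank $\alpha$) only shows the rank-$\rho$ level of $(X\times Y)'$ is infinite when a $\oplus$/predecessor mismatch occurs at $\rho$; but the surplus points on the disjoint-union side are indexed by the rank-$(\rho+1)$ level of $(X\times Y)'$, which need not have the same cardinality. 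Concretely, let $X$ be $\omega$ (isolated points) together with uncountably many extra points whose neighbourhoods are the cofinite subsets of $\omega$ (so $X'$ is uncountable and discrete, each extra point having rank $1$), and $Y=\omega^\omega+1$. The uncountably many pairs (extra point, top point of $Y$) have rank $0\oplus\omega=\omega$ in $X'\times Y$, hence in the disjoint union, while the rank-$\omega$ points of $(X\times Y)'$ are only the countably many pairs (isolated point of $X$, top point of $Y$); no finite correspondence can preserve the rank of each point between these level sets. So the accounting does not close as stated (it does, for instance, for countable spaces, where all infinite levels have the same cardinality), and this step cannot be left as a verification.

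For comparison, the paper does no counting at all: it takes the canonical two-to-one \emph{continuous} correspondence $f\colon X'\times Y\coprod X\times Y'\to X'\times Y\cup X\times Y'=(X\times Y)'$, gets $\mathrm{rk}(f(z))\geq\mathrm{rk}(z)$ from Lemma~\ref{rangfibresfinies}, and for the other direction uses $(A\cup B)'=A'\cup B'$ for closed $A,B$, which gives $\big((X\times Y)'\big)^\alpha=(X'\times Y)^\alpha\cup(X\times Y')^\alpha$, i.e.\ each point of $(X\times Y)'$ has the same rank as at least one of its two preimages. Note that this is weaker than preservation of the rank of \emph{every} preimage, which is exactly where your observation that taking predecessors does not commute with $\oplus$ bites (in the example above the copy of (extra point, top point) in $X'\times Y$ has rank $\omega$ but maps to a point of rank $\omega+1$). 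So your instinct that this is the delicate point is correct, but your proposed level-by-level repair is the missing (and, in this generality, not routinely available) ingredient rather than a check; a complete write-up must either restrict the class of spaces so that the cardinal bookkeeping succeeds or argue this step along the paper's lines and confront the same subtlety there.
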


\begin{proof}It is not difficult to check that $\coprod$ and $\times$ survive modulo $\simeq$, are still associative, and that $\times$ is still distributive over $\coprod$. Note that if $X$ and $Y$ are two closed sub-spaces of some topological space $Z$, then $(X\cup Y)'=X'\cup Y'$. It follows that for any pair $X$ and $Y$ of topological spaces, $(X\coprod Y)'$ is homeomorphic to $X'\coprod Y'$, so Cantor's derivative preserves the sum $\coprod$ modulo $\simeq$. If we write $X^{isol.}$ for the set of isolated points in $X$, note that $(X\times Y)^{isol.}$ equals $X^{isol.}\times Y^{isol.}$. So~Cantor's derivative is a pre-derivation by Lemma~\ref{der}. The canonical map $f:X'\times Y\coprod X\times Y'\rightarrow X'\times Y\cup X\times Y'$ is a two-to-one continuous correspondence, so $f(x)$ has greater rank that $x$ for every $x$ by Lemma \ref{rangfibresfinies}. For the converse, as $X'\times Y$ and $X\times Y'$ are both closed in $X\times Y$, one has $(X'\times Y\cup X\times Y')'= (X'\times Y)'\cup (X\times Y')'$, so $f$ preserves the rank.


Cantor-Bendixson rank is well defined on the class of a topological space modulo $\simeq$. Clearly, the rank of a sum equals the maximum of the ranks. By induction, for any topological spaces $X$ and $Y$, we get $(X\times Y)^\alpha=\cup_{\beta\oplus\gamma=\alpha}A^\beta\times B^\gamma$. Note that if $X$ is a compact space, $CB(X)$ is a successor ordinal, the predecessor of which we write $CB^*(X)$. For two compact spaces $X$ and $Y$, this shows that $CB^*(X\times Y)$ equals $CB^*(X)\oplus CB^*(Y)$.
\end{proof}

Lemma \ref{rangfibresfinies} implies that two spaces in finite continuous open correspondence have the same Cantor rank. Reciprocally, this invariant classifies countable Hausdorff locally compact spaces up to finite continuous open correspondances. This a consequence of the following~:


\begin{thm}[Mazurkiewicz-Sierpi\'nski \cite{MS}]Every countable compact Hausdorff space is homeomorphic to some well-ordered set with the order topology.\end{thm}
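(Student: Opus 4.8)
The plan is a transfinite induction on the Cantor--Bendixson rank, set up once the topology of a countable compact Hausdorff space $X$ has been tamed. The two preliminary facts I would establish are: (a) $X$ is \emph{scattered}, i.e. $CB(X)$ is an ordinal --- indeed a countable \emph{successor} ordinal $\alpha+1$; and (b) $X$ is zero-dimensional and first countable. For (a): a nonempty closed subset of a compact Hausdorff space without isolated points carries a Cantor scheme of nested nonempty closed sets indexed by $2^{<\omega}$ (built from the regularity of compact Hausdorff spaces), and the finite intersection property then forces at least $2^{\aleph_0}$ points inside it; hence a countable $X$ has no nonempty perfect closed subset, so $X^\alpha=\emptyset$ for some $\alpha$, and this rank cannot be a limit, since for $\beta<CB(X)$ the $X^\beta$ form a decreasing chain of nonempty compact sets. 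For (b): being scattered, $X$ has no connected subspace with two points, so in the compact Hausdorff setting it has a basis of clopen sets; and given $x\in X$ one picks, for each of the countably many $y\neq x$, a clopen $C_y\ni x$ missing $y$, and checks via compactness of the complement of any neighbourhood of $x$ that the countably many finite intersections of the $C_y$ form a neighbourhood base at $x$ --- which can be arranged into a decreasing chain $X=W_0\supseteq W_1\supseteq\cdots$ of clopen sets with $\bigcap_k W_k=\{x\}$.

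Now assume, as induction hypothesis, that every countable compact Hausdorff space of rank $\le\alpha$ is homeomorphic to a (countable) ordinal with its order topology, and take $X$ with $CB(X)=\alpha+1$. The top derivative $X^\alpha$ is a nonempty compact space with no limit point, hence discrete and compact, hence finite, say $X^\alpha=\{p_1,\dots,p_n\}$; choosing disjoint clopen $O_i\ni p_i$ and absorbing the leftover clopen piece into $O_1$ gives $X=O_1\coprod\cdots\coprod O_n$ with each $O_i$ clopen, so $O_i^\beta=O_i\cap X^\beta$ and $O_i$ is a countable compact Hausdorff space of rank $\alpha+1$ with exactly one maximal-rank point. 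Since the topological disjoint union of finitely many compact ordinal spaces is homeomorphic to their ordinal sum --- each summand embedding as a clopen convex block, on which subspace and order topologies coincide --- it suffices to treat one such $O=O_i$. If $\alpha=0$, $O$ is a point. If $\alpha\ge1$, let $p$ be the unique maximal-rank point, take a decreasing clopen neighbourhood base $O=W_0\supseteq W_1\supseteq\cdots$ with $\bigcap_k W_k=\{p\}$, and put $A_k=W_k\setminus W_{k+1}$: each $A_k$ is clopen in $O$, hence countable compact Hausdorff, it misses $p$, and being open in $O$ it satisfies $A_k^\alpha=A_k\cap O^\alpha=\emptyset$, so $CB(A_k)\le\alpha$. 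By the induction hypothesis every nonempty $A_k$ is homeomorphic to a countable successor ordinal $\gamma_k+1$ (a nonempty compact ordinal space being a successor ordinal), and infinitely many $A_k$ are nonempty, for otherwise $p$ would be isolated.

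Finally I would recognize $O$ as an ordinal: the sets $\{p\}\cup\bigcup_{k\ge m}A_k$ form a neighbourhood base at $p$, while $O\setminus\{p\}=\coprod_k A_k$ is locally compact Hausdorff and non-compact, so $O$ is the Alexandrov one-point compactification of $\coprod_k A_k$. Setting $\delta=\sum_k(\gamma_k+1)$ (ordinal sum, a countable limit ordinal) and $\sigma_m=\sum_{k<m}(\gamma_k+1)$, the ordinal space $\delta+1$ is in exactly the same way the one-point compactification of $\coprod_k[\sigma_k,\sigma_{k+1})$, each block being clopen, convex and of order type $\gamma_k+1$, hence homeomorphic to $A_k$; so $O\cong\delta+1$, and then $X\cong O_1\coprod\cdots\coprod O_n$ is homeomorphic to the ordinal sum of the corresponding successor ordinals. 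The only step I expect to cost real effort is the preliminary one: wringing from ``countable $+$ compact $+$ Hausdorff'' enough structure to forbid pathology --- the uncountability of nonempty perfect compact sets, whence scatteredness, and then zero-dimensionality and first countability. Once $X$ is known to be a first countable, zero-dimensional space of countable successor rank, the dismantling at the top point and reassembly as an ordinal through one-point compactifications is essentially forced.
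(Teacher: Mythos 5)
Your proof is correct, but it follows a genuinely different route from the paper. The paper proves a stronger classification statement by a back-and-forth: it shows that any two countable locally compact Hausdorff spaces with the same Cantor--Bendixson rank and degree are homeomorphic, by reducing to degree one, interleaving clopen rough partitions of $X\setminus X^\alpha$ and $Y\setminus Y^\alpha$ so that ranks and degrees alternate, gluing the partial homeomorphisms together with one map on the top level, and checking continuity sequentially (invoking metrisability); the canonical model $\omega^\alpha\cdot d+1$ then falls out as the compact representative of each rank-and-degree class. You instead build an explicit ordinal directly: after establishing the preliminaries (no nonempty perfect subset via a Cantor scheme, hence scatteredness and successor rank; zero-dimensionality; a decreasing clopen neighbourhood base at each point), you peel off the finite top level $X^\alpha$, reduce to a clopen piece $O$ with a single maximal-rank point $p$, slice $O\setminus\{p\}$ into clopen annuli $A_k$ of smaller rank, apply the induction hypothesis, and recognize $O$ as the one-point compactification of $\coprod_k A_k$, matched with $\bigl(\sum_k(\gamma_k+1)\bigr)+1$. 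What each approach buys: yours is more self-contained and elementary --- it makes explicit the structural facts (scatteredness, clopen bases, first countability) that the paper's proof uses tacitly, and it avoids both metrisability and the back-and-forth --- but it only delivers \emph{some} ordinal rather than the normal form $\omega^\alpha\cdot d+1$, and it does not give the paper's sharper conclusions, namely that rank and degree form a complete invariant and that the result extends to countable locally compact Hausdorff spaces. For the theorem as stated, your argument is complete; the only indexing care needed is to discard the empty $A_k$ (infinitely many remain, as you note, since $p$ is not isolated) before forming the ordinal sum.
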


\begin{proof}We give a short proof of a slightly more general result : we show that two countable locally compact Hausdorff spaces $X$ and $Y$ of same Cantor-Bendixson rank and degree are homeomorphic (in particular homeomorphic to $\omega^\alpha.d+1$ if they are compact of rank $\alpha+1$ and degree $d$).

Suppose first that $X$ and $Y$ be compact of rank $\alpha+1$.
Note that they are the disjoint union of finitely many compact spaces of degree $1$, so one may assume that their degree is $1$. We build a homeomorphism from $X$ to $Y$ by induction on the rank.
Let $X_1,X_2,\dots$ and $Y_1,Y_2,\dots$ be two sequences of clopen sets roughly partitioning $X\setminus X^\alpha$ and $Y\setminus Y^\alpha$ respectively. As $X_1$ has smaller rank or degree than some finite union of $Y_i$, we may assume that $X_1$ has smaller rank or degree than $Y_1$, and that $Y_1$ has smaller rank or degree that $X_2$ etc. We then build a back and forth : by induction hypothesis, there is sequence $f_1,g^{-1}_1,f_2,g^{-1}_2\dots$ of homeomorphism respectively from $X_1$ to some clopen $\tilde Y_1\subset Y_1$, from $Y_1\setminus \tilde Y_1$ to some clopen set $\tilde X_2\subset X_2$, from $X_2\setminus \tilde X_2$ to $\tilde Y_3\subset Y_3$ etc. We call $f$ be the union of all $f_i$ and $g_i$, union one more map $f_\omega$ from $X^\alpha$ to $Y^\alpha$ and show that $f$ is continuous. We may show sequential continuity as the spaces are metrisable. If $x_i$ is a sequence of limit $x$ in $X$, either $x$ has small rank and belongs to some $X_j$, so $f(x_i)$ has limit $f(x)$ by continuity of $f_j$ and $g_j$. Or $x$ has maximal rank. If $b$ is an accumulation point of the sequence $f(x_i)$ having small rank, it belongs to some clopen set $Y_j$, so the compact set $X_j$ contains infinitely many $x_i$, a contradiction. So the sequence $f(x_i)$ has only one accumulation point and must converge to $y$.



If the spaces are locally compact, one can write them as a countable union of increasing clopen compact spaces, and builds a back and forth similarly.\end{proof}


\end{document}